     \newcommand{\BF}{{\mathbb {F}}}
    \newcommand{\BQ}{{\mathbb {Q}}}
     \newcommand{\BZ}{{\mathbb {Z}}}
    \newcommand{\CG}{{\mathcal {G}}}
     \newcommand{\fp}{{\mathfrak{p}}}
    \newcommand{\Gal}{{\mathrm{Gal}}}
     \newcommand{\rank}{{\mathrm{rank}}}
    \theoremstyle{plain}
    \newtheorem{theorem}{Theorem}[section]
    \newtheorem{thm}{theorem}[section] 
    \newtheorem{corollary}[thm]{Corollary}
    \newtheorem{lemma}[thm]{Lemma}  
     \newtheorem{definition}[thm]{Definition}
\newtheorem{example}[thm]{Example}
    \numberwithin{equation}{section}
\begin{document}

\title {stabilization on ideal class groups in potential cyclic towers}

\author{Jianing Li}

\maketitle
\begin{abstract}
Let $p$ be a prime and let $F$ be a number field. 
Consider a Galois extension $K/F$ with Galois group $H\rtimes \Delta$ where $H\cong \BZ_p$ or $\BZ/p^d\BZ$, and $\Delta$ is an arbitrary Galois group. The subfields fixed
 by $H^{p^n} \rtimes \Delta$ $(n=0,1,\cdots)$ form a tower which we call it a potential cyclic $p$-tower in this paper. A radical $p$-tower is a typical example, say $\BQ\subset \BQ(\sqrt[p]{a})\subset  \BQ(\sqrt[p^2]{a})\subset \cdots$ where $a\in \BZ$. 
 We extend the stabilization result of Fukuda in Iwasawa theory on $p$-class groups in cyclic $p$-towers to potential cyclic $p$-towers. We also extend Iwasawa's class number formula in $\BZ_p$-extensions to potential $\BZ_p$-extensions. 

\end{abstract}

\section{Introduction}
Let $p$ be a prime number and let $F$ be a number field. Let $F_\infty/F$ be a $\BZ_p$-extension with $n$-th layer $F_n$. The classical Iwasawa theory studies the $p$-class groups of $F_n$ when $n$ varies. The stationary result in Iwasawa theory proved by Fukuda \cite{Fuk94}  states that if the $p$-class groups of $F_{n_0}$ and $F_{n_0+m}$ are isomorphic for $m=1$, then the same holds for each $m\geq 1$; here $n_0$ is some integer such that $F_\infty/F_{n_0}$ is totally ramified at every ramified prime. Using a similar argument, 
a stabilization result can be proved in a finite cyclic $p$-tower; see \cite[Proposition 3.2]{LOXZ22}. It is used there to explicitly compute certain $p$-class groups of fields in radical towers, like $\BQ(\mu_{p^2}, \sqrt[p^n]{a})$ with $n$ varies, which are not Galois extensions in general.  
It motivates the author to think whether there are stationary results for $p$-class groups in non-cyclic radical towers, say $\BQ\subset \BQ(\sqrt[p]{a}) \subset \BQ(\sqrt[p^2]{a}) \subset \cdots $, where $a$ is an integer. In this paper they will be called potential cyclic $p$-towers, and we 
 prove a stationary result in such towers and obtain an "Iwasawa like" class number formula for such a tower.

\begin{definition}[Potential cyclic $p$-tower]\label{def:main}
Let $p$ be a prime number. Let $F$ be a number field.
Let $K/F$ be a Galois extension and let $G=\Gal(K/F)$. Assume that $G=H\rtimes \Delta$ is a semidirect product of two subgroups ($H$ being normal), and that $H\cong \BZ/p^d\BZ$ or $H\cong \BZ_p$, while $\Delta$ may be an arbitrary (finite or infinite) Galois group. Let $F_n$ be the intermediate field of $K/F$ fixed by the group $H^{p^n}\rtimes \Delta$, so $F_0=F$. We then get a tower $F_0\subset F_1\subset \cdots \subset F_d$ ($d=\infty$ in the case $H=\BZ_p$) with $[F_{n+1}:F_n]=p$ for each $n$, and we say this tower is a \textit{potential cyclic p-tower} (in the case $H\cong\BZ_p$, we may call it a \textit{potential  $\BZ_p$-tower}). 
\end{definition}

\begin{example}\label{exa:potential_cyclic}
\begin{enumerate}[label=\upshape(\arabic*)]
\item $\BZ_p$-extensions and $\BZ/p^d\BZ$-extensions clearly give potential cyclic $p$-towers.
\item Suppose $a\in F$ such that $x^p-a$ is an irreducible polynomial. Then 
the radical tower $F_n=F(\sqrt[p^n]{a})$ ($n=0,1,\cdots, \infty$) is a potential cyclic p-tower, with $K=F_\infty(\mu_{p^\infty})$. A particular interesting example is $\BQ\subset \BQ(\sqrt[p]{p}) \subset  \BQ(\sqrt[p^2]{p}) \subset \cdots \subset\BQ(\sqrt[p^\infty]{p})$, in which every subextension is not Galois.
\end{enumerate}
\end{example}

To obtain results on class groups, we make the following hypothesis on ramifications:

\textbf{RamHyp:}
Let $K/F$ and $\{F_n\}$ be as in a potential cyclic $p$-tower.
If $v$ is a prime of $F$ which is ramified in $K/F$, we assume that the inertia group $I_v(K/F)$ of $v$ is conjugate to a subgroup of $G$ of the form $H\rtimes \Delta_v$, where $\Delta_v$ is a subgroup of $\Delta$. Moreover, assume that there exists a prime $v$ which is totally ramified in $K/F$, in other words $I_v(K/F)=G$.
 
 \begin{example}
 \begin{enumerate}[label=\upshape(\arabic*)]
 \item In a $\BZ_p$-extension $F_\infty/F$, \textbf{RamHyp} holds for $F_\infty/F_{n_0}$ when $n_0$ is sufficiently large.
 \item Regarding Example~\ref{exa:potential_cyclic}(2), we consider the $2$-dimensional $p$-adic tower $F_{n,m}=F(\sqrt[p^n]{a},\mu_{p^m})$ and let $K=F_{\infty,\infty}$. Then one can show that the potential cyclic tower arising from $K/F_{N,N}$ satisfies \textbf{RamHyp} when $N$ is sufficiently large. 
 \end{enumerate}
 \end{example}

If $E$ is any number field, let $A_E$ denote the $p$-class group (i.e. $p$-Sylow subgroup of the class group) of $E$.  Our first main result is the following:

\begin{theorem}\label{thm:main}
Let $\{F_n\}$ be a potential cyclic $p$-tower satisfying  \textbf{RamHyp}. 
 If $A_{F_1}\cong A_{F_0}$, then $A_{F_n}\cong A_{F_0}$ for $n\geq 1$. More generally, for each $k\geq 1$, if $A_{F_1}/p^k A_{F_1} \cong A_{F_0}/p^k A_{F_0}$, then $A_{F_n}/p^k A_{F_n} \cong A_{F_0}/p^k A_{F_0}$  for $n\geq 1$.
\end{theorem}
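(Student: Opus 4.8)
The plan is to first reduce the stated isomorphism assertions to statements about the \emph{orders} of $p$-class groups, then to trade the ``potential'' (non-Galois) tower for a genuinely cyclic-step one by a prime-to-$p$ base change, and finally to run a Fukuda-style genus-theoretic propagation.

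The basic class field theory input I would record is that if $L/k$ is \emph{any} finite extension of number fields in which at least one prime is totally ramified, then $N_{L/k}\colon \Cl(L)\to\Cl(k)$ is surjective: compatibility of the Artin map with the norm identifies $\Cl(k)/N_{L/k}\Cl(L)$ with $\Gal((H_k\cap L)/k)$ for $H_k$ the Hilbert class field, and the totally ramified prime forces $H_k\cap L=k$ (its prime below has $e=f=g=1$). Applying this to each step $F_{n+1}/F_n$, where the prime $v$ of \textbf{RamHyp} with $I_v(K/F)=G$ stays totally ramified because its inertia surjects onto $H^{p^n}/H^{p^{n+1}}$, produces surjections $\cdots\twoheadrightarrow A_{F_{n+1}}\twoheadrightarrow A_{F_n}\twoheadrightarrow\cdots\twoheadrightarrow A_{F_0}$. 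Since a surjection of finite abelian $p$-groups of equal order is an isomorphism, and likewise after applying $-\otimes\BZ/p^k$, it suffices to prove that $\#A_{F_1}=\#A_{F_0}$ implies $\#A_{F_n}=\#A_{F_0}$ for all $n$, together with the analogous statement for the orders of the $p^k$-quotients.

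Next I would remove the non-Galois nature of the individual steps. The conjugation action of $\Delta$ on $H\cong\BZ_p$ (or $\BZ/p^d$) is a character $\chi$, and $F_{n+1}/F_n$ fails to be Galois exactly when the Teichm\"uller part $\omega$ of $\chi$ is nontrivial. Let $M\subseteq K$ be the field fixed by $\ker\omega$; then $[M:F]\mid p-1$ is prime to $p$, so $M$ is a number field even when $H\cong\BZ_p$, and over $M$ the residual action lies in $1+p\BZ_p$. One checks that $F_n':=F_nM$ yields a tower in which every step $F_{n+1}'/F_n'$ is cyclic of degree $p$ (indeed $F_{n+1}'$ is the Galois closure of $F_{n+1}/F_n$, with $\Gal(F_{n+1}'/F_n)\cong \BZ/p\rtimes\Gamma$, $\Gamma=\omega(\Delta)$), that \textbf{RamHyp} persists, and that $A_{F_n}\cong A_{F_n'}^{\Gamma}$ since $\#\Gamma$ is prime to $p$. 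As prime-to-$p$ invariants are exact, I would run the remaining argument $\Gamma$-equivariantly and read off the conclusion on the trivial isotypic component, which recovers $A_{F_n}$.

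The heart of the proof, and the step I expect to be the main obstacle, is the propagation of stabilization in the cyclic-step tower $\{F_n'\}$. Writing $\sigma_n$ for a generator of $\Gal(F_{n+1}'/F_n')$, surjectivity of the norm together with $\#A_{F_{n+1}'}=\#A_{F_n'}$ forces $\ker N_{n}=0$, hence $(\sigma_n-1)A_{F_{n+1}'}=0$, i.e. $\sigma_n$ acts trivially. The difficulty is passing from stabilization at one step to the next: unlike the classical $\BZ_p$-setting of \cite{Fuk94}, the tower $\{F_n'\}$ is not globally Galois, so there is no Iwasawa module over $\BZ_p[[\BZ_p]]$ on which to run the Nakayama argument that upgrades ``stable once'' to $\lambda=\mu=0$. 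Instead I would feed Chevalley's ambiguous class number formula into each step: the number of ramified primes is constant along the tower (each totally ramified prime has a unique prime above it), so the formula pins down the order of the $\sigma_n$-ambiguous classes in terms of a unit index, and the task reduces to controlling these unit indices and the non-ambiguous part up the tower, $\Gamma$-equivariantly (with the archimedean contributions, which are prime to $p$ for odd $p$, tracked separately when $p=2$). This norm-compatible analysis is the genuinely new input beyond \cite{Fuk94} and \cite[Prop.~3.2]{LOXZ22} (an alternative would be to pass to the non-commutative Iwasawa module of the Galois-closure tower, whose Galois group over $M$ is a $p$-adic Lie group of the form $H\rtimes\Gamma'$). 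Granting the propagation, induction from $\#A_{F_1}=\#A_{F_0}$ gives $\#A_{F_n}=\#A_{F_0}$ for all $n$, and the same scheme applied to the $p^k$-truncations yields the refined statement.
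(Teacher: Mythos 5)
Your proposal has a genuine gap at exactly the point you flag as ``the heart of the proof'': the propagation of stabilization up the tower is never actually proved. The first two reductions are fine (norm surjectivity from the totally ramified prime, hence reduction to orders; and the prime-to-$p$ base change to $M=K^{H\rtimes\ker\omega}$ with $A_{F_n}\cong A_{F_nM}^{\Gamma}$), but they only reformulate the problem: as you yourself observe, the tower $\{F_n'\}$ is still not Galois over $F_0'$ beyond the first step, so after the base change you face the same difficulty you started with. At that point you propose to run Chevalley's ambiguous class number formula step by step and to ``control these unit indices and the non-ambiguous part up the tower,'' and then write ``Granting the propagation\dots''. That is a research plan, not an argument; controlling unit indices along a non-Galois tower is precisely the hard content of the theorem, and nothing in the proposal supplies it. So the proof is incomplete where it matters.

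The diagnosis behind the gap is also mistaken: your claim that ``there is no Iwasawa module on which to run the Nakayama argument'' is exactly what the paper refutes, and your parenthetical alternative (a module for $H\rtimes\Gamma'$) is the right idea but undeveloped. Since $K/F$ \emph{is} Galois with group $G=H\rtimes\Delta$, one takes $L$ to be the $p$-Hilbert class field of $K$ and $X=\Gal(L/K)$ as a $\BZ_p[G]$-module. Using \textbf{RamHyp}, the inertia group of the totally ramified prime splits $\Gal(L/F)=X\rtimes I_w$, and the inertia groups of the other ramified primes differ from $I_w$ by explicit elements $a_i,b_{\delta,i}\in X$; packaging the $\Delta$-directions into $D=\langle I_\Delta X, B\rangle_{H-\mathrm{mod}}$ and the $H$-directions into $C=\langle (h-1)X, A\rangle_{H-\mathrm{mod}}$, class field theory gives, for every $n$,
\begin{equation*}
A_{F_n}\;\cong\;\overline{X}/\omega_n\overline{C},\qquad \overline{X}=X/D,\quad \overline{C}=(C+D)/D,\quad \omega_n=\tfrac{h^{p^n}-1}{h-1},
\end{equation*}
with $\overline{X},\overline{C}$ modules over $\BZ_p[H]$ \emph{independent of $n$} --- this is the Iwasawa module you thought did not exist; the point is that it lives over $\BZ_p[H]$, not over a group ring of the (non-Galois) tower, and no passage to $M$ is needed since arbitrary $\Delta$ is absorbed into $D$. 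Given this, the theorem is three lines: the hypothesis gives $p^k\overline{X}+\omega_1\overline{C}=p^k\overline{X}+\overline{C}$, the element $\omega_1$ lies in the maximal ideal of the local ring $\BZ_p[H]$, so Nakayama yields $\overline{C}\subseteq p^k\overline{X}$, whence $A_{F_n}/p^kA_{F_n}\cong\overline{X}/p^k\overline{X}$ for all $n$. Your step-by-step genus-theoretic scheme, even if it could be pushed through, would be a much weaker substitute for this uniform descent identity.
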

By \textbf{RamHyp}, the norm map $A_{F_n}\to A_{F_{n-1}}$ is always surjective for each $n$, so the condition $A_{F_1}\cong A_{F_{0}}$ is equivalent to $|A_{F_1}| = |A_{F_{0}}|$. For a finite abelian group $A$, recall that its $p^i$-rank $(i\geq 1)$ is defined as $\rank_{p^i}A := \mathrm{dim}_{\BF_p} (p^{i-1}A)/(p^iA)$.  A direct consequence is the following:
\begin{corollary}
If $|A_{F_1}|=|A_{F_0}|$, then $|A_{F_n}|=|A_{F_0}|$. More generally, for each $k\geq 1$, if $\rank_{p^i}A_{F_1}=\rank_{p^i}A_{F_0}$ holds for every $i\leq k$, then the same equalities hold for $A_{F_n}$ and $A_{F_0}$ for each $n\geq 1$.
\end{corollary}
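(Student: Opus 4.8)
The plan is to deduce the corollary directly from Theorem~\ref{thm:main}, translating its hypotheses and conclusions into the language of $p^i$-ranks. The first assertion is immediate: by the remark preceding the corollary the norm map $A_{F_1}\to A_{F_0}$ is surjective, so $|A_{F_1}|=|A_{F_0}|$ forces it to be an isomorphism, i.e. $A_{F_1}\cong A_{F_0}$; Theorem~\ref{thm:main} then gives $A_{F_n}\cong A_{F_0}$, whence $|A_{F_n}|=|A_{F_0}|$ for all $n\ge 1$.

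For the general statement, the key step I would isolate is a purely group-theoretic dictionary: for finite abelian $p$-groups $A$ and $B$ and any $k\ge 1$,
\[
A/p^kA\cong B/p^kB \iff \rank_{p^i}A=\rank_{p^i}B \text{ for all } 1\le i\le k.
\]
To prove this I would write $A\cong\bigoplus_j \BZ/p^{e_j}\BZ$ with $e_j\ge 1$ and record two elementary facts from the classification of finite abelian $p$-groups: first, $p^{i-1}A/p^iA\cong\bigoplus_{j:\,e_j\ge i}\BZ/p\BZ$, so that $\rank_{p^i}A=\#\{j:e_j\ge i\}$; second, $A/p^kA\cong\bigoplus_j\BZ/p^{\min(e_j,k)}\BZ$. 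Since each $\min(e_j,k)$ lies in $\{1,\dots,k\}$, the isomorphism type of $A/p^kA$ is determined by the multiset $\{\min(e_j,k)\}_j$, hence by the cumulative counts $\#\{j:\min(e_j,k)\ge i\}$ for $i=1,\dots,k$; and for $i\le k$ one has $\#\{j:\min(e_j,k)\ge i\}=\#\{j:e_j\ge i\}=\rank_{p^i}A$. This yields the displayed equivalence.

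With the dictionary in hand the corollary follows mechanically. The hypothesis $\rank_{p^i}A_{F_1}=\rank_{p^i}A_{F_0}$ for all $i\le k$ is equivalent to $A_{F_1}/p^kA_{F_1}\cong A_{F_0}/p^kA_{F_0}$; the second part of Theorem~\ref{thm:main} then yields $A_{F_n}/p^kA_{F_n}\cong A_{F_0}/p^kA_{F_0}$ for all $n\ge 1$; and applying the dictionary once more translates this back into $\rank_{p^i}A_{F_n}=\rank_{p^i}A_{F_0}$ for all $i\le k$, as desired.

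I do not expect a genuine obstacle here, since all the arithmetic content is carried by Theorem~\ref{thm:main} together with the norm surjectivity recorded in the remark. The only point requiring care is the bookkeeping in the group-theoretic equivalence above, specifically the verification that the $p^i$-ranks for $i\le k$ recover exactly the invariant factors of $A/p^kA$ and nothing finer; but this is routine once the two elementary-divisor computations are written out.
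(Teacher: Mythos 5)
Your proof is correct and takes essentially the same route as the paper, which presents the corollary as a direct consequence of Theorem~\ref{thm:main} together with the norm-map surjectivity noted in the preceding remark. The only thing you add is the routine elementary-divisor verification that, for finite abelian $p$-groups, the data $\left(\rank_{p^i}A\right)_{i\le k}$ is equivalent to the isomorphism type of $A/p^kA$, which is exactly the implicit dictionary the paper relies on.
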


Fix an integer $m\geq 0$, then the tower $F_n= \BQ(\mu_{p^m},\sqrt[p^n]{p})$ ($n=0,1,\cdots$) satisfies (see \cite[Lemma 4.1]{LOXZ22}) the condition in Theorem~\ref{thm:main} with $K=\BQ(\mu_{p^\infty}, \sqrt[p^\infty]{p})$. If $p$ is a regular prime, it is easy to show that $A_{F_n}$ is trivial for each $n$ and each $m$. 
Let $p=37$, the smallest irregular prime with $A_{\BQ(\mu_p)}=\BZ/p\BZ$
; then it is computed by McCallum and Sharifi that $A_{\BQ(\sqrt[p]{p})}=0$ and $A_{\BQ(\mu_p,\sqrt[p]{p})}=\BZ/p\BZ$; see \cite{MS03}. It follows from Theorem~\ref{thm:main} we indeed have $A_{\BQ(\mu_p,\sqrt[p^n]{p})}=\BZ/p\BZ$ and $A_{\BQ(\sqrt[p^n]{p})}=0$ for each $n\geq 1$. Other explicit examples can also be found in \cite{LOXZ22} and whose proofs can be simplified using Theorem~\ref{thm:main}.

Our second main result is a generalization of Iwasawa's classical class number formula (see \cite{Iwa58}) in a $\BZ_p$-extension. 
\begin{theorem}\label{thm:formula}
Let $\{F_n\}$ be a potential cyclic $p$-tower satisfying  \textbf{RamHyp}. 
 Let $e_n\geq 0$ such that $p^{e_n}=|A_{F_n}|$.
Then there exit non-negative integers $\mu, \lambda, \nu$ such that 
$e_n= \mu p^n +\lambda n +\nu$ when $n$ is sufficiently large.
\end{theorem}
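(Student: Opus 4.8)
The plan is to reduce Theorem~\ref{thm:formula} to the classical algebraic fact about finitely generated torsion modules over the Iwasawa algebra $\Lambda=\BZ_p[[T]]$: if $\msx$ is such a module and $\omega_n=(1+T)^{p^n}-1$, then $|\msx/\omega_n\msx|=p^{\mu p^n+\lambda n+\nu}$ for all large $n$, with $\mu,\lambda\geq 0$ the usual invariants. The arithmetic heart is therefore to manufacture, out of the potential tower $\{F_n\}$, a $\Lambda$-module whose quotients by the $\omega_n$ compute $|A_{F_n}|$ up to a bounded (eventually constant) error.

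First I would pass to the genuine $\BZ_p$-extension hidden in the construction. Setting $E=K^{H}$, the extension $K/E$ is Galois with group $H\cong\BZ_p$, its $n$-th layer is $K^{H^{p^n}}$, and $F_n$ is the subfield of $K^{H^{p^n}}$ fixed by the image of $\Delta$. Let $M/K$ be the maximal unramified abelian pro-$p$ extension and put $\msx=\Gal(M/K)$. Since $M/F$ is Galois, $G=H\rtimes\Delta$ acts on $\msx$; restricting to $H$ (and choosing a topological generator, so that $\BZ_p[[H]]\cong\Lambda$) makes $\msx$ a $\Lambda$-module, while the residual $\Delta$-action records the semidirect twist by the character $\chi\colon\Delta\to\Aut(H)=\BZ_p^{\times}$. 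Because $H^{p^n}\rtimes\Delta=\Gal(K/F_n)$ and $H^{p^n}$ is normal, the coinvariants decompose as $\msx_{\Gal(K/F_n)}=(\msx/\omega_n\msx)_\Delta$.

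The key bridge is a control theorem. Using \textbf{RamHyp}, and in particular the totally ramified prime $v$ with $I_v=G$, I would show that the norm maps make $X:=\varprojlim_n A_{F_n}$ a finitely generated torsion $\Lambda$-module and that the natural maps identify $A_{F_n}$ with $(\msx/\omega_n\msx)_\Delta$ up to a factor bounded independently of $n$. The totally ramified prime plays exactly the role it plays in Iwasawa's original argument and in the proof of Theorem~\ref{thm:main}: it forces the norm maps $A_{F_{n+1}}\to A_{F_n}$ to be surjective (as already noted in the text) and annihilates the genus-theoretic correction terms in the limit, so that $A_{F_n}$ is recovered as a quotient of $X$. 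Finiteness of each $A_{F_n}$ together with Nakayama's lemma then yields finite generation of $X$, while the totally ramified prime yields torsionness. Granting this, Theorem~\ref{thm:formula} follows from the algebraic lemma quoted above, the invariants $\mu,\lambda,\nu$ being read off from $X$ and the bounded error absorbed into $\nu$.

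I expect the main obstacle to be precisely this control theorem in the presence of an \emph{infinite} intermediate field $E=K^{H}$ (as already happens in the motivating example $F_n=\BQ(\mu_{p^m},\sqrt[p^n]{p})$, where $E=\BQ(\mu_{p^\infty})$): the usual finiteness inputs of Iwasawa theory over a number field are unavailable over $E$, so the finiteness and finite generation of $X$ must be extracted directly from the finiteness of the $A_{F_n}$ and from the genus-theoretic machinery developed for Theorem~\ref{thm:main}, rather than from class-number finiteness over $E$. A secondary technical point is to verify that passing to $\Delta$-coinvariants commutes with $-/\omega_n$ up to bounded error, i.e.\ that the homology $H_1(\Delta,\msx/\omega_n\msx)$ does not disturb the $\mu p^n+\lambda n+\nu$ shape; given the twist by $\chi$, this should again be controlled by the totally ramified prime supplied by \textbf{RamHyp}.
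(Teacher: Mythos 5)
Your high-level plan coincides with the paper's: both reduce the theorem to the classical algebraic fact that for finitely generated torsion $\BZ_p[[H]]$-modules $Y'\subset Y$ with all quotients $Y/\omega_nY'$ finite, the orders grow as $p^{\mu p^n+\lambda n+\nu}$ for large $n$. The problem is that the entire arithmetic content of the theorem is the ``control theorem'' you defer, and what you sketch in its place would not suffice. First, your proposed statement --- that $A_{F_n}$ agrees with $(\msx/\omega_n\msx)_\Delta$ up to a factor bounded independently of $n$ --- is strictly weaker than what is needed: a bounded but possibly oscillating discrepancy only yields $e_n=\mu p^n+\lambda n+O(1)$, not an exact formula with an integer constant $\nu$; your parenthetical ``(eventually constant)'' is an assertion, not an argument. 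Second, the route through $\Delta$-coinvariants and $H_1(\Delta,\msx/\omega_n\msx)$ is delicate precisely because $\Delta$ may be infinite --- the definition allows this, and it happens in the motivating example $F_n=\BQ(\sqrt[p^n]{p})$, where $\Delta\cong\Gal(\BQ(\mu_{p^\infty})/\BQ)$ --- and in any case the deviation of $A_{F_n}$ from the full coinvariant module $\msx_{\Gal(K/F_n)}$ is governed by the inertia groups of \emph{all} ramified primes, which is exactly the bookkeeping you have not done. Third, your plan to get finite generation of $\varprojlim_n A_{F_n}$ from Nakayama presupposes that a quotient such as $X/(p,h-1)X$ injects into (or is dominated by) a single $A_{F_n}$, which again is the control theorem; as written the logic is circular.

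What the paper does instead is make the control exact, with no error term, and this is where \textbf{RamHyp} is really used. Applying the descent lemma (Lemma~2.1) to each $K/F_n$, one chooses inertia groups $I_{w_i}\cong H\rtimes\Delta_i$ (this is \textbf{RamHyp}) and records the difference elements $a_i=\tilde h_1^{-1}\tilde h_i$ and $b_{\delta,i}=\tilde\delta_1^{-1}\tilde\delta_i$ in $X$; setting $C=\langle (h-1)X,\,a_2,\dots,a_r\rangle_{H-\mathrm{mod}}$ and $D=\langle I_\Delta X,\,b_{\delta,i}\rangle_{H-\mathrm{mod}}$, both independent of $n$, one obtains the exact isomorphism $A_{F_n}\cong \overline{X}/\omega_n\overline{C}$ for \emph{every} $n\ge 0$, where $\overline{X}=X/D$, $\overline{C}=(C+D)/D$, and $\omega_n=(h^{p^n}-1)/(h-1)$. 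The point that makes the $\Delta$-direction exact is that \textbf{RamHyp} forces every inertia group to contain all of $H$, so in descending from $F_0$ to $F_n$ only the $H$-components of the inertia elements change (they get multiplied by $\omega_n$), while the $\Delta$-components $b_{\delta,i}$ and $I_\Delta X$ are absorbed once and for all into $D$. After that, $\overline{X}$ is finitely generated torsion over $\BZ_p[[H]]$ (Nakayama applied to $\overline{X}/\omega_1\overline{C}\cong A_{F_1}$, then the structure theorem), and the classical lemma is applied to the pair $\overline{C}\subset\overline{X}$ --- note it concerns $\overline{X}/\omega_n\overline{C}$, a quotient by $\omega_n$ times a \emph{submodule}, not by $\omega_n\overline{X}$, which is a further respect in which your formulation is off. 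Filling the gap in your proposal essentially amounts to reconstructing this computation.
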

This has been proved by Lei \cite{Lei17} under the following three conditions (1): $\Delta \cong \BZ_p$; (2): there is only one prime $\fp$ of $F$ lying above $p$; (3): $\fp$ is totally ramified in $K/F$. 

We mention a related result of Caputo and Nuccio \cite{CN23}.
Let $k$ be an imaginary quadratic field, let $K/k$ be the anti-cyclotomic $\BZ_p$-extension and suppose $p\neq 2$. Then $\Gal(K/\BQ) \cong \BZ_p\rtimes \BZ/2\BZ:=H\rtimes \Delta$. The subgroups $H^{p^n}\rtimes \Delta$ correspond to a potential $\BZ_p$-tower $\BQ=F_0\subset F_1 \subset \cdots F_\infty$, which are called "fake $\BZ_p$-extensions of dihedral type" in \cite{CN23}. They prove an "Iwasawa like" class number formula but the $\mu, \nu$-invariants are only in $\BZ[1/2]$ (see \cite[Theorem 4.6]{CN23}). We remark that Theorem~\ref{thm:formula} can not directly apply to this situation since the \textbf{RamHyp} does not hold in general.

\subsection*{Acknowledgment.}
The author thanks Yuan Ren and Lim Meng-Fai for helpful discussions on this paper. The author thanks the referee for helpful comments and suggestions.
The author is partially supported by NSFC (No. 12201347), by Shandong Provincial Natural Science Foundation (ZR202111170066), by the Fundamental Research Funds for the Central Universities, and by the Taishan Young Scholar Program (Grant No. tsqn202211043).

\section{The proof}
The idea of the proofs is from Iwasawa's theory of $\BZ_p$-extension (see \cite{Iwa58} or \cite[Chapter 13]{Was97}). 
\subsection{Descent in General Galois extension}

Let $p$ be a prime number and let $F$ be a number field. Let $K/F$ be an arbitrary Galois extension in this subsection. Write $G=\Gal(K/F)$. Let $L$ be the $p$-Hilbert class field of $K$, i.e., the maximal unramified pro-$p$-extension of $K$.  Since $K/F$ is a Galois extension, $L/F$ is also Galois.

\textbf{Notation}:
\begin{itemize}
\item 
Write $\CG=\Gal(L/F)$ and $X=\Gal(L/K)$. The group $G$ acts on $X$ by lifting conjugation. Say $g\in G, x\in X$, then $g\cdot x=\tilde{g}x\tilde{g}^{-1}$, where $\tilde{g}\in \CG$ is any lifting of $g$.

\item Let $\BZ_p[G]$ denote the (completed) group ring, so that $X$ is a $\BZ_p[G]$-module. Let $I_G$ denote the argumentation ideal of $\BZ_p[G]$, so $I_G$ is the $\BZ_p$-linear combination of $\{g-1: g\in G\}$. Then the closure of $I_GX$ (written as $I_G X$ by abuse of notation) is a compact $\BZ_p[G]$-submodule of $X$ and is a normal subgroup of $\CG$. 
\item 
If $V$ is a closed subgroup of $\CG$, let $[V,V]$ denote the closure of the commutator subgroup of $V$; and let $V'$ denote the closure of the image of $V$ in $\CG/I_GX$. In particular, $X'=X/I_GX$, on which $G$ acts trivially.

\item If $T\subset \CG$, the notation $\langle T\rangle$ denotes the closure of the subgroup of $\CG$ generated by $T$.

\item  If $E/F$ is a Galois extension of number fields and $w$ is a prime of $E$, let $I_w(E/F)$ denote the inertia group $w$.

\item Let $v_1, \cdots, v_r$ (here $r\leq \infty$, but in our further application $r<\infty$ always) be the primes of $F$ which are ramified in $K$. For each $i$, choose a prime $w_i$ of $L$ lying above $v_i$.

\end{itemize}

\begin{lemma}\label{lem:general}
Assume that $w=w_1$ is totally ramified in $K/F$. Write $I_{w} =I_w(L/F)$. Then we have:
\begin{enumerate}[label=\upshape(\arabic*)]
\item  $I_{w}\cap X = 0$, $\CG/X \cong G$, and $I_{w}\cong G$ under this isomorphism. Therefore, $\CG=X\rtimes I_{w}$.
\item $[\CG,\CG]=I_G X \rtimes [I_w,I_w]$.
\item $I'_w$ is normal in $\CG'$ and $\CG'\cong X'\times I'_w$.
\item $A_F\cong \Gal(L(F)/F) \cong \CG/\langle I_GX,I_w, I_{w_2},\cdots, I_{w_r}\rangle \cong \CG'/( I'_w \langle I'_{w_2},\cdots, I'_{w_r}\rangle)$, where $L(F)$ is the $p$-Hilbert class field of $F$.
\item Let $Y=X\cap  \langle I_GX, I_{w}, I_{w_2},\cdots, I_{w_r}\rangle$. Then the inclusion $X\to \CG$ induces an isomorphism $X/Y \cong A_F$. \end{enumerate}
\end{lemma}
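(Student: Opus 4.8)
The plan is to build everything on the splitting of the exact sequence $1 \to X \to \CG \to G \to 1$ furnished by the inertia group $I_w$, and then to read off parts (2)--(5) as formal consequences. For part (1), I would first note that since $L/K$ is unramified, the inertia group of $w$ in $L/K$ is trivial; as $X=\Gal(L/K)$, this gives $I_w\cap X=0$. Next, restriction $\CG=\Gal(L/F)\to\Gal(K/F)=G$ carries $I_w=I_w(L/F)$ onto the inertia group of $w|_K$ in $K/F$, which is all of $G$ because $v_1$ is totally ramified. Hence $I_w\to G$ is both injective (kernel $I_w\cap X=0$) and surjective, so $I_w\cong G$ and, since $X$ is normal with $X\cap I_w=0$ and $X\cdot I_w=\CG$, we obtain $\CG=X\rtimes I_w$. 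All of this is to be carried out with closed subgroups in the profinite setting, using that inertia maps onto inertia in quotients and vanishes in unramified subextensions.

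For part (2), I would use the commutator calculus for a semidirect product $X\rtimes H$ with $X$ abelian. Writing $X$ additively, a direct computation gives $[x,h]=(1-h)\cdot x$, so $[X,I_w]$ generates the augmentation submodule $I_G X$ (under the identification $I_w\cong G$), while $[X,X]=0$ and commutators within $I_w$ give $[I_w,I_w]$. Since $I_G X$ is normal in $\CG$ and $X\cap I_w=0$, the subgroup $\langle I_G X,[I_w,I_w]\rangle$ is the internal semidirect product $I_G X\rtimes[I_w,I_w]$; to see it equals $[\CG,\CG]$ I would check that the quotient of $\CG$ by it is abelian, namely modulo $I_G X$ the subgroup $X'=X/I_G X$ becomes central (as $G$ acts trivially on $X'$), so $\CG/I_G X\cong X'\times I_w$, and killing $[I_w,I_w]$ leaves $X'\times I_w^{\ab}$, which is abelian. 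Part (3) then follows immediately: quotienting $\CG=X\rtimes I_w$ by the normal subgroup $I_G X\subseteq X$ gives $\CG'=X'\rtimes I'_w$, and because $I_w$ acts trivially on $X'$ this is the direct product $X'\times I'_w$, with $I'_w$ normal.

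Parts (4) and (5) are the class field theory payoff. By the Artin map $A_F\cong\Gal(L(F)/F)$, and one first checks $L(F)\subseteq L$ (the compositum $L(F)K/K$ is an unramified abelian $p$-extension, hence inside $L$). The field $L(F)$ is the maximal subextension of $L/F$ that is both abelian and unramified over $F$, so $\Gal(L/L(F))$ is the closed subgroup generated by $[\CG,\CG]$ together with all inertia groups of $L/F$; modulo $[\CG,\CG]$ conjugate inertia groups coincide, so this subgroup is $\langle[\CG,\CG],I_{w_1},\ldots,I_{w_r}\rangle$. Using part (2) and $[I_w,I_w]\subseteq I_{w_1}$, this simplifies to $\langle I_G X,I_w,I_{w_2},\ldots,I_{w_r}\rangle$, giving the displayed isomorphisms (the last one upon dividing by $I_G X$ and using part (3) so that $I'_w$ is normal). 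Finally, for part (5), set $N=\langle I_G X,I_w,I_{w_2},\ldots,I_{w_r}\rangle$; since $\CG=X\cdot I_w$ and $I_w\subseteq N$, we have $XN=\CG$, so the second isomorphism theorem yields $X/(X\cap N)\cong\CG/N\cong A_F$, which is exactly the claim with $Y=X\cap N$.

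The main obstacle I anticipate is part (1): pinning down that the inertia subgroup genuinely splits the sequence requires the two ramification facts (triviality of inertia in the unramified extension $L/K$, and surjectivity of inertia onto the totally ramified $G$) to hold cleanly in the profinite setting, and it is precisely the total-ramification hypothesis on $w_1$ that makes the splitting, and hence the whole structure theory, work. The remaining parts are essentially group-theoretic bookkeeping once this splitting is in hand.
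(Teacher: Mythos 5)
Your proposal is correct and follows essentially the same route as the paper's proof: the splitting $\CG=X\rtimes I_w$ from total ramification of $w_1$ and unramifiedness of $L/K$, the commutator computation identifying $[\CG,\CG]=I_GX\rtimes[I_w,I_w]$, the passage to $\CG'=X'\times I'_w$, the class-field-theoretic identification of $\Gal(L/L(F))$ with the subgroup generated by $I_GX$ and the inertia groups, and the second isomorphism theorem for part (5). The paper merely compresses several of these steps (parts (1), (4), (5) are dismissed as ``obvious'' or cited to definitions), so your write-up is a faithful, more detailed version of the same argument.
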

Before proof, we remark that  these results are independent of the ways in which the primes $w_i$ of $L$ lying above $v_i$ are chosen.

\begin{proof}[Proof]\begin{enumerate}
\item This is obvious, since $v$ is totally ramified in $K/F$ and $L/K$ is unramified. 
\item 
Since $(g-1)x=\tilde{g}x\tilde{g}^{-1}x^{-1}$ for $g-1 \in I_G, x\in X$, the commutator group of $\CG$ contains the subgroup
$\langle I_G X, [I_{w},I_{w}]\rangle$ which is equal to $I_G X \rtimes [I_w,I_w]$ as $I_G X$ is normal in $\CG$. On the other hand, since $\CG=X\rtimes I_w$ and since $G\cong I_w$ acts on $X'$ trivially, we have isomorphisms
($X\rtimes I_w)/(I_G X \rtimes [I_w,I_w]) \cong (X'\times I_w)/[I_w,I_w]$, the latter being abelian clearly. This proves (2).
\item By (1) and the definition of $X'$, we have $X'\cap I'_w=0$. Hence we have $\CG'=X'\rtimes I'_w$. Since the action of $\CG$ and hence $I_w$ on $X'$ is trivial, it follows that $I'_w$ is normal in $\CG'$ and therefore $X'\rtimes I'_w =X'\times I'_w$.
\item The first isomorphism is by class field theory. The second is from the definiton of $L(F)$ and (2); the third is from (3).
\item This is obvious.
\end{enumerate}
\end{proof}

\subsection{Descent in Potential Cyclic $p$-towers}
Let $K/F$, $\{F_n\}$, $G=H\rtimes\Delta$ be as in a potential Cyclic $p$-tower; see Definition~\ref{def:main}. Assume it satisfies \textbf{RamHyp}. 
So $H\cong \BZ/p^d\BZ$ or $H\cong \BZ_p$, and we fix a (topological) generator $h$ of $H$. 
We shall apply Lemma~\ref{lem:general} to $K/F_n$ whose Galois group is $H^{p^n}\rtimes \Delta$;  we first treat the case $K/F=K/F_0$.

Assume $v_1,\cdots, v_r$ are the primes of $F$ ramified in $K$, and assume that $v=v_1$ is totally ramified in $K$. By \textbf{RamHyp}, for each $i$, we can choose some prime $w_i$ of $L$ such that (writing $w=w_1$)
\begin{equation}\label{eq:inertia_iso}
I_{w_i}=I_{w_i}(L/F)  \cong H\times \Delta_i \quad \text{ and } \quad I_{w}\cong G=H\times \Delta
\end{equation} under the isomorphism $\CG/X\cong G$ in Lemma~\ref{lem:general}(1).  
If $g\in G$, let $\tilde{g}_{i} \in I_{w_i}$ be the inverse image of $g$, when it exists, under \eqref{eq:inertia_iso}. Then for $1\leq i \leq r$, we have 
$\tilde{h}_i\equiv \tilde{h}_{1} \bmod X$ and for each $\delta \in \Delta_i$, $\tilde{\delta}_i \equiv \tilde{\delta}_{1} \bmod X$. 
Now put 
\begin{gather}\label{eq:ab}
a_i:=\tilde{h}^{-1}_1\tilde{h}_i\in X,\quad b_{\delta,i} := \tilde{\delta}^{-1}_{1}\tilde{\delta}_i\in X,
\end{gather}
Define subgroups of $X$ by 
\[A=\langle a_2,\cdots, a_r\rangle \text{ and }B=\langle \{b_{\delta,i}: i=1,\cdots, r, \delta\in \Delta_{i}\}\rangle.\]
It follows that
\[
\langle I_{w_1},\cdots, I_{w_r}\rangle = \langle I_{w}, A,B\rangle,
\]
and that as subgroups of $\CG'=\CG/I_G X$, 
\begin{equation}\label{eq:inert2}
\langle I'_{w_1},\cdots, I'_{w_r}\rangle = \langle I'_{w}, A',B'\rangle = I'_{w}\langle A',B'\rangle.
\end{equation}
The last equality holds because $I'_{w}$ is normal in $\CG'$ by Lemma~\ref{lem:general}(3). Since $I'_w \cap X'$ is trivial by Lemma~\ref{lem:general}(3), we obtain 
\begin{equation}
X'\cap \langle I'_{w_1},\cdots, I'_{w_r}\rangle = \langle A',B'\rangle.
\end{equation}
It follows that 
\begin{equation}
X\cap \langle I_G X, I_{w_1},\cdots, I_{w_r} \rangle =\langle I_G X, A,B\rangle.
\end{equation}
We remark that the left hand side is \textit{a prior} a $G$-submodule of $X$, so is the right hand side.
By Lemma~\ref{lem:general}(5) we have
\begin{equation}
A_{F_0} \cong X/\langle I_G X, A,B\rangle.
\end{equation}

Now we apply the above argument and Lemma~\ref{lem:general} to $K/F_n$, so $L$ remains unchanged, $\CG$ becomes $\CG_n=\Gal(L/F_n)$, and $G$ becomes $G_n:=\Gal(L/F_n)$. The inertia groups $I_{w_i}(L/F_n)=I_{w_i}(L/F)\cap \Gal(L/F_n)$ and now maps isomorphically to $H^{p^n}\rtimes \Delta_i$. So the elements $b_{\delta,i}$ in \eqref{eq:ab} are unchanged, but  those $a_i$ change to $\tilde{h}_1^{-p^n}\tilde{h}_i^{p^n}$.
Let $$\omega_n =\frac{h^{p^n}-1}{h-1} \in \BZ_p[H].$$ Then one directly computes that 
\begin{equation}
\tilde{h}_1^{-p^n}\tilde{h}_i^{p^n} = \omega_n a_i,
\end{equation}
Hence as subgroups of $X':=X/I_{G_n} X$, we have 
$$X'\cap \langle I'_{w_1}(L/F_n),\cdots, I'_{w_r}(L/F_n)\rangle = X'\cap (I'_{w_1}(L/F_n) \langle \omega_n A', B'\rangle)=\langle \omega_n A',B'\rangle;$$
here the first equality uses that $I'_{w_1}(L/F_n)$ is normal in $\Gal(L/F_n)$ by Lemma~\ref{lem:general}, and the second is obvious.
Then as subgroups of $X$, we have
$$X\cap \langle I_{G_n} X, I_{w_1}(L/F_n),\cdots, I_{w_r}(L/F_n) \rangle =\langle I_{G_n} X, \omega_nA,B\rangle.$$
By Lemma~\ref{lem:general}(5) we have that for each $n\geq 0$,
\[
A_{F_n} \cong X/\langle I_{G_n} X, \omega_n A, B\rangle.
\]
\begin{lemma}
As a left $\BZ_p[H^{p^n}]$-module, $I_{G_n}$ is generated by $h^{p^n}-1=\omega_n (h-1)$ and $I_\Delta$, where $I_\Delta$ is the $\BZ_p$-linear combination of $\{\delta-1:\delta\in \Delta\}$. 
\end{lemma}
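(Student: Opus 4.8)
The plan is to prove the asserted equality by checking the two inclusions separately, the reverse one being immediate. Writing $M := \BZ_p[H^{p^n}](h^{p^n}-1) + \BZ_p[H^{p^n}]I_\Delta$ for the left $\BZ_p[H^{p^n}]$-module generated by $h^{p^n}-1$ and $I_\Delta$, the inclusion $M\subseteq I_{G_n}$ holds because both $h^{p^n}-1$ and every $\delta-1$ lie in the augmentation ideal $I_{G_n}$, which is a left ideal of $\BZ_p[G_n]$ and hence in particular stable under left multiplication by $\BZ_p[H^{p^n}]$. The substance of the lemma is the forward inclusion $I_{G_n}\subseteq M$. The conceptual point I want to highlight is that, while it is standard that $I_{G_n}$ is generated as a left $\BZ_p[G_n]$-module by $h^{p^n}-1$ together with $\{\delta-1:\delta\in\Delta\}$ (these being the elements $s-1$ for a topological generating set $s$ of $G_n=H^{p^n}\rtimes\Delta$), the semidirect-product structure is exactly what allows the coefficient ring to be descended from the full group ring $\BZ_p[G_n]$ down to the smaller ring $\BZ_p[H^{p^n}]$.

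To carry this out I would first recall that $I_{G_n}$ is topologically generated as a $\BZ_p$-module by $\{g-1:g\in G_n\}$, since any element $\sum_g c_g g$ with $\sum_g c_g=0$ equals $\sum_g c_g(g-1)$. Hence it suffices to place each $g-1$ in $M$. Using the unique decomposition $g=\eta\delta$ with $\eta\in H^{p^n}$ and $\delta\in\Delta$, I would apply the identity
\[
g-1 = \eta\delta-1 = \eta(\delta-1)+(\eta-1).
\]
The first summand lies in $\BZ_p[H^{p^n}]I_\Delta$ because $\eta\in\BZ_p[H^{p^n}]$ and $\delta-1\in I_\Delta$.

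It then remains to treat $\eta-1$, where I would use that $H^{p^n}$ is (topologically) cyclic with generator $h^{p^n}$. Writing $\eta=(h^{p^n})^x$ (with $x$ an integer in the finite case, or $x\in\BZ_p$ when $H\cong\BZ_p$), the factorization $(h^{p^n})^x-1 = \bigl(\sum_{j}(h^{p^n})^{j}\bigr)(h^{p^n}-1)$ exhibits $\eta-1$ as an element of $\BZ_p[H^{p^n}](h^{p^n}-1)$; here I use that $\BZ_p[H^{p^n}]$ is commutative since $H^{p^n}$ is abelian. Combining the two summands gives $g-1\in M$, completing the forward inclusion.

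The main obstacle — more a matter of care than of genuine difficulty — is the completed, topological setting that arises when $H\cong\BZ_p$ and $\Delta$ is infinite. In that case $\BZ_p[H^{p^n}]$ is an Iwasawa algebra, and under the identification $h^{p^n}-1\leftrightarrow T$ the finite geometric series must be replaced by the convergent expansion $(1+T)^x-1\in T\,\BZ_p[H^{p^n}]$, so the factorization of $\eta-1$ holds in the closure of $\BZ_p[H^{p^n}](h^{p^n}-1)$. I would therefore interpret the $\BZ_p$-span of $\{g-1\}$ and the module $M$ as closed submodules; both closures are harmless because $\BZ_p[H^{p^n}]$ is a complete Noetherian local ring and $M$ is finitely generated over it. With these topological points settled, the algebraic identity above yields the lemma for arbitrary profinite $\Delta$.
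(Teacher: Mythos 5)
Your proposal is correct and takes essentially the same approach as the paper: the paper's one-line proof is precisely your decomposition, written as $h^i\delta-1=h^i(\delta-1)+(h^i-1)=h^i(\delta-1)+\frac{h^i-1}{h-1}(h-1)$ after reducing to the case $n=0$, while you run the identical computation directly at level $n$ with $h^{p^n}$ in place of $h$, and additionally make explicit the topological closure issues in the case $H\cong\BZ_p$ that the paper leaves implicit. The only inaccuracy is your side remark that the closures are harmless because $M$ is finitely generated over $\BZ_p[H^{p^n}]$ --- this fails when $\Delta$ is infinite, since $I_\Delta$ then requires infinitely many generators --- but as the paper's conventions already treat all such submodules as closed, this does not affect the validity of the argument.
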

\begin{proof}
It suffices to prove the case $n=0$. 
If $h^i\delta \in G$, then $h^i\delta-1=h^i\delta-h^i+h^i-1=h^i(\delta-1)+(h^i-1)=h^i(\delta-1)+\frac{h^i-1}{h-1}(h-1)$. Since $I_{G_0}$ is the linear combination of $h^i\delta-1$, this completes the proof.
\end{proof}

Now consider the following two $\BZ_p[H]$-submodules of $X$, both independent of $n$: 
\begin{equation}
C:=\langle (h-1)X, A\rangle_{H-\mathrm{mod}}, \quad D:=\langle I_\Delta X, B\rangle_{H-\mathrm{mod}}.
\end{equation}
Here the notation $\langle T  \rangle_{H-\mathrm{mod}}$ means the $\BZ_p[H]$-submodule of $X$ generated by $T$ for $T\subset X$.
Then we obtain the following isomorphisms  for each $n\geq 0$
\begin{equation}\label{eq:clgp_final_expression}
A_{F_n} \cong X/(\omega_n C+D) \cong\overline{X}/\omega_n \overline{C}.
\end{equation}
Here we write $\overline{X} := X/D$  and $\overline{C} := (C+D)/D$; both are $\BZ_p[H]$-modules and are independent of $n$.

\begin{theorem}\label{thm:finite_torsion}
In the case $H\cong \BZ_p$, 
the $\BZ_p[H]$-module $\overline{X}$ is finitely generated and torsion.
\end{theorem}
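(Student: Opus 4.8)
The plan is to regard $\overline{X}$ as a module over the Iwasawa algebra $\Lambda := \BZ_p[[H]]$, which the chosen generator $h$ identifies with $\BZ_p[[T]]$ via $T=h-1$; this is a complete regular local ring with maximal ideal $\fm=(p,T)$, and $\omega_n=((1+T)^{p^n}-1)/T$ is a distinguished polynomial. Since $X$ is profinite (hence compact) and $D$ is a closed $\Lambda$-submodule, $\overline{X}=X/D$ is a compact $\Lambda$-module, so the topological Nakayama lemma and the structure theory of finitely generated $\Lambda$-modules apply. The only arithmetic input I will use is that each $A_{F_n}$ is finite --- it is the $p$-class group of the number field $F_n$, and $[F_n:F]=p^n<\infty$ --- together with the isomorphism $A_{F_n}\cong\overline{X}/\omega_n\overline{C}$ recorded in \eqref{eq:clgp_final_expression}.

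First I would pass from $\overline{C}$ to $\overline{X}$. As $\overline{C}\subseteq\overline{X}$ we have $\omega_n\overline{C}\subseteq\omega_n\overline{X}$, giving a natural surjection $A_{F_n}\cong\overline{X}/\omega_n\overline{C}\twoheadrightarrow\overline{X}/\omega_n\overline{X}$; hence $\overline{X}/\omega_n\overline{X}$ is finite for every $n$. For finite generation, take $n=1$: expanding gives $\omega_1\equiv T^{p-1}\pmod p$, so $\omega_1\in\fm$ and $\overline{X}/\fm\overline{X}$ is a quotient of the finite group $\overline{X}/\omega_1\overline{X}$. Thus $\overline{X}/\fm\overline{X}$ is finite, and topological Nakayama for compact $\Lambda$-modules shows that $\overline{X}$ is finitely generated over $\Lambda$. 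Note this step needs nothing about the number $r$ of ramified primes.

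It remains to show that $\overline{X}$ is torsion, which is the Iwasawa-theoretic heart of the statement, and again I would exploit the finiteness of $\overline{X}/\omega_1\overline{X}$. The polynomial $\omega_1$ has constant term $p$ and leading term $T^{p-1}$, with all intermediate coefficients equal to binomials $\binom{p}{k}$ $(1\leq k\leq p-1)$ divisible by $p$, hence it is Eisenstein at $p$ and irreducible in $\Lambda$. Therefore $\fp:=(\omega_1)$ is a height-one prime with $p\notin\fp$, and $\Lambda_\fp$ is a discrete valuation ring in which $\omega_1$ is a uniformizer. A finite $\Lambda$-module is killed by a power of $p$ and so vanishes after localizing at $\fp$; applying this to $\overline{X}/\omega_1\overline{X}$ yields $\overline{X}_\fp/\omega_1\overline{X}_\fp=0$. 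As $\overline{X}_\fp$ is finitely generated over the local ring $\Lambda_\fp$ and $\omega_1$ lies in its maximal ideal, Nakayama forces $\overline{X}_\fp=0$; localizing further to the generic point gives $\overline{X}\otimes_\Lambda\Frac(\Lambda)=0$, i.e.\ $\overline{X}$ has $\Lambda$-rank $0$ and is torsion. Alternatively one may invoke the standard criterion that a finitely generated $\Lambda$-module $M$ with $M/\omega_n M$ finite for all large $n$ is torsion, cf.\ \cite[Ch.~13]{Was97}.

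The main obstacle is this last torsion step: finite generation is essentially formal once a single class group is known finite, whereas torsion requires genuinely using the finiteness of $\overline{X}/\omega_1\overline{X}$ against the ring structure of $\Lambda$. The points to verify carefully are that $\omega_1$ is a nonunit generating a height-one prime avoiding $p$, and that the vanishing of the localization $\overline{X}_\fp$ indeed forces the generic $\Lambda$-rank to be zero; both are routine but should be stated explicitly.
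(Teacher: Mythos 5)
Your proof is correct, and it splits into two halves of different character when compared with the paper. The finite-generation half is essentially the paper's own argument: both of you observe $\omega_1\in(p,h-1)$, chain the surjections $A_{F_1}\cong\overline{X}/\omega_1\overline{C}\twoheadrightarrow\overline{X}/\omega_1\overline{X}\twoheadrightarrow\overline{X}/(p,h-1)\overline{X}$ to get finiteness of the Nakayama quotient, and then apply the compact (topological) Nakayama lemma, citing \cite{Was97}. For the torsion half you take a genuinely different route. The paper quotes the structure theorem for finitely generated modules over $\Lambda:=\BZ_p[H]\cong\BZ_p[[T]]$: up to pseudo-isomorphism $\overline{X}$ contains a free summand $\Lambda^r$, and since $\Lambda/\omega_n\Lambda$ is infinite while $\overline{X}/\omega_n\overline{X}$ is finite, one must have $r=0$. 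You instead argue by localization: $\omega_1$ is Eisenstein at $p$, hence irreducible, so $\fp=(\omega_1)$ is a height-one prime with $p\notin\fp$ and $\Lambda_\fp$ a discrete valuation ring; the finite module $\overline{X}/\omega_1\overline{X}$ is $p$-power torsion and so dies at $\fp$, Nakayama over $\Lambda_\fp$ gives $\overline{X}_\fp=0$, and passing to $\Frac(\Lambda)$ shows the rank is zero. Your version avoids pseudo-isomorphism theory altogether, needing only level-$1$ data and elementary commutative algebra (the points you flag --- irreducibility of $\omega_1$, $p\notin\fp$, and that vanishing at $\fp$ kills the generic rank --- all check out, including for $p=2$ where $\omega_1=T+2$); the paper's version is shorter once the structure theorem is taken as a black box, and it is exactly the argument your closing ``alternatively'' remark gestures at. Both proofs are complete and rest on the same arithmetic input, the finiteness of $A_{F_1}$.
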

\begin{proof}

We turn to the case $H\cong \BZ_p$. 
We first show that $\overline{X}$ is finitely generated over $\BZ_p[H]$. It suffices to show that $\overline{X}/(p, h-1)\overline{X}$ is finite by the compact version of Nakayama's lemma; see \cite[Lemma 13.16]{Was97}. Since $\omega_1 \in (p,h-1)$ the module $\overline{X}/(p, h-1)\overline{X}$ is a quotient of $\overline{X}/\omega_1 \overline{X}$ which is a quotient of $\overline{X}/\omega_1 \overline{C}\cong A_{F_1}$, which is finite. 

According to  the  structure theorem of $\BZ_p[H]$-modules (see \cite[\S 13.2]{Was97}), there is a direct summand $(\BZ_p[H])^r$ in $\overline{X}$ (up to a finite group). But $\BZ_p[H]/\omega_n \BZ_p[H]$ is infinite, so $r=0$ and therefore $\overline{X}$ is torsion over $\BZ_p[H]$.
\end{proof}

\begin{proof}[Proof of Theorem~\ref{thm:formula}]
Classical arguments in Iwasawa theory (see for example \cite[Lemma 13.18, Proposition 13.19, Lemma 13.21]{Was97}) shows that if $Y'\subset Y$ are finitely generated torsion $\BZ_p[H]$-modules such that $Y/\omega_n Y'$ is finite for each $n$, then $|Y/\omega_n Y'|= \mu p^n+\lambda n +\nu$ when $n$ is sufficiently large for integers $\mu, \lambda, \nu$. Therefore Theorem~\ref{thm:formula} follows from \eqref{eq:clgp_final_expression} and Theorem~\ref{thm:finite_torsion}.
\end{proof}

\begin{proof}[Proof of Theorem~\ref{thm:main}]
We only need to prove the second statement. By \eqref{eq:clgp_final_expression}, $A_{F_n}/p^kA_{F_n} \cong \overline{X}/\left(p^k \overline{X}+\omega_n \overline{C}\right)$ for each $n$. 
Then the condition of the theorem implies that $p^k \overline{X}+\omega_1 \overline{C} = p^k \overline{X}+ \overline{C}$. Since $\omega$ is in the maximal ideal of $\BZ_p[H]$, by Nakayama's lemma we have $\overline{C}\subset p^k \overline{X}$. Therefore, $A_{F_n}/p^kA_{F_n} \cong \overline{X}/p^k \overline{X}$ for each $n$. This completes the proof of Theorem~\ref{thm:main}.\end{proof}

\bigskip

\noindent 
Research Center for Mathematics and Interdisciplinary Sciences, Shandong University, Qingdao 266237, PR China 

\

\noindent
Frontiers Science Center for Nonlinear Expectations, Ministry of Education, Qingdao 266237, PR China

{\it lijn@sdu.edu.cn}

\end{document}